\documentclass{amsart}
\usepackage{graphics}
\usepackage{amsthm}
\usepackage{epsfig}
\usepackage{color}
\usepackage[pdftex]{hyperref} 
 
% counter for equations inside proof of Theorem

% Commands: luis
\newcommand{\norm}[1]{\left\Vert#1\right\Vert}

\newcommand{\set}[1]{\left\{#1\right\}}
\newcommand{\eps}{\varepsilon}
\newcommand{\grad} {\nabla}

\newcommand{\bdary} {\partial}

\newcommand{\dd} {\; \mathrm{d}}

% Commands: adam
\newcommand{\bq}{\begin{equation}}
\newcommand{\eq}{\end{equation}}

\newcommand{\x}{\mathbf{x}}
\newcommand{\uc}{{u^c}}
\newcommand{\R}{\mathbb{R}}
\newcommand{\e}{\epsilon}

\newcommand{\ac}{\mathbf{\theta}}

%\newcommand{Phat}{{\hat P}}

% Environments: luis
\newtheorem{thm}{Theorem}[section]
\newtheorem{cor}[thm]{Corollary}
\newtheorem{defn}[thm]{Definition}
%\newtheorem{conjecture}[thm]{Conjecture}
%\newtheorem{question}[thm]{Question}
%\newtheorem{example}[thm]{Example}
%\newtheorem{preremark}[thm]{Remark}
%\newenvironment{remark}{\begin{preremark}\rm}{\medskip \end{preremark}}

% Environments: Adam
%\newtheorem{theorem}{Theorem}
%\newtheorem{conjecture}{Conjecture}
%%\theoremstyle{definition}
\theoremstyle{remark}

\newtheorem{lemma}{Lemma}
%\numberwithin{equation}{section}

%\numberwithin{equation}{section}

\begin{document}

\title[The Dirichlet problem for the convex envelope]{The Dirichlet problem for the convex envelope}
%    Information for first author
\author{Adam~M. Oberman}
\address{Department of Mathematics, Simon Fraser University}
\email{aoberman@sfu.ca}
%    Information for second author
\author{Luis Silvestre}
\address{Department of Mathematics, University of Chicago}
\email{silvestr@cims.nyu.edu}

\date{\today} 	
\begin{abstract}
The Convex Envelope of a given function was recently characterized as the solution of a fully nonlinear Partial Differential Equation (PDE).   
In this article we study a modified problem: the Dirichlet problem for the underlying PDE. 
%Regularity of solutions is studied. 
%The solution is shown to be a ruled surface, which means second derivatives exist in the direction.  
The main result is an optimal regularity result.  Differentiability ($C^{1,\alpha}$ regularity) of the boundary data implies the corresponding result  for the solution in the interior, despite the fact that the solution need not be continuous up to the boundary.    
Secondary results are the characterization of the convex envelope as:
(i) the value function of a stochastic control problem, 
and (ii) the optimal underestimator for a class of nonlinear elliptic PDEs.   
\end{abstract}   
\subjclass[2000]{35J60, 35J70, 26B25, 49L25}
% 49L25 Viscosity solutions
% 60G40 Stopping times; optimal stopping problems;
%26B Functions of several variables
%26B25 Convexity, generalizations
% 53C45 Global surface theory (convex surfaces ˆ la A. D. Aleksandrov)

\keywords{partial differential equations, convex envelope, viscosity solutions, stochastic control}
\maketitle

%\tableofcontents

\section{Introduction}
In this article we study a problem which is at the interface of Convex Analysis and nonlinear elliptic Partial Differential Equations.   The object of this study is the Convex Envelope.  In a previous work by one of the authors~\cite{ObermanCE}, a Partial Differential Equation (PDE) in the form of an obstacle problem for the convex envelope was obtained.    In this article we further explore the connection between the convex envelope and nonlinear elliptic PDEs by studying the Dirichlet Problem for the convex envelope.  

The convex envelope, $u$, of given boundary data, $g$, on a bounded domain, $\Omega \subset \R^n$, with boundary, $\partial \Omega$, 
is the solution of the following fully nonlinear and degenerate elliptic Partial Differential Equation,
\begin{align}\tag{PDE}\label{lam}
\lambda_1[u](x) = 0,   &\qquad \text{ for $x$ in $\Omega$},\\
u(x) = g(x), &\qquad \text{ for $x \in \partial\Omega$}.
\tag{D}\label{D}
\end{align}
Here $\lambda_1[u]$ denotes the smallest eigenvalue of the Hessian, $D^2u$, of the function $u$.

The solution describes a convex function which is nowhere strictly convex, in a sense which we explain later.   We refer to the solution of~\eqref{lam},\eqref{D} as the convex envelope of the boundary data $g(x)$, since these solutions agree with the usual definition~\eqref{CEdefn} (below).

While the convex envelope has been well-studied, it is useful to have a different characterization.
In fact, this characterization has led to PDE-based numerical methods for computing the convex envelope~\cite{ObermanCENumerics}.  

\subsubsection*{Continuity at the boundary}
Note that while we study the Dirichlet boundary condition~\eqref{D} for~\eqref{lam}, 
the solution need not achieve the boundary values (for example if the boundary data is nonconvex).  
In addition, in dimensions three or higher, the convex envelope itself need not be continuous up to the boundary~\cite{KruskalCounterExample}.

\subsubsection*{Interior regularity}
The first regularity result  establishes 
$C^{1,\alpha}$ regularity of the solution in the interior, provided the boundary data is $C^{1,\alpha}$.   

This result is unusual for the following reason:
loosely speaking, regularity results for elliptic PDEs (see~\cite{CaffCabreBook} or~\cite{LuisRegularity} for more references) often fall into one of two categories.
The first is regularity from the boundary.  For example, solutions of uniformly elliptic PDEs are $C^{1,\alpha}$ regularity up to the boundary, provided the boundary data is $C^{1,\alpha}$ (Theorem 9.31 of~\cite{GilbargTrudinger}).

 The second is interior regularity, assuming nothing on the boundary nevertheless forces some regularity in the interior.
Our first regularity result is of the first type.   However it has the unusual feature that despite the smooth boundary condition, the regularity holds only in the interior.

\subsubsection*{Geometry of the contact set}
The second result we obtain concerns the geometry of the contact set of the solution with a supporting hyperplane.   As a consequence of this result, it is established that at each point in the interior, there is at least one direction in which the solution is linear.   This result is used to show that viscosity solutions satisfy~\eqref{lam} in a classical sense even though they may not be differentiable.

\subsubsection*{Stochastic control interpretation}
We give an interpretation of the convex envelope as the value function of a stochastic control problem.   

\subsubsection*{Application to degenerate elliptic PDEs}
Pursuing the connection with elliptic Partial Differential Equations further we show that the convex envelope of Dirichlet data is a natural subsolution of a class of degenerate elliptic PDEs. 

Viscosity solutions of~\eqref{lam} are also viscosity solutions of the  elliptic Monge-Amp\`ere equation with zero right hand side.   Thus the results for~\eqref{lam} also apply for the  elliptic Monge-Amp\`ere equation with zero right hand side. 

The equation~\eqref{lam} is highly degenerate: it is degenerate in every direction except the direction of the eigenvector of the first eigenvalue of the Hessian.   
Another highly degenerate equation is the Infinity Laplace equation~\cite{CrandallTour}, which is degenerate in every direction except the direction of the gradient.

\subsection{The Convex Envelope}
The convex envelope of a given function $g(x)$ is mathematical object which has been the subject of study for many years. 
The convex envelope of the function $g(x)$ is defined as the supremum of all convex functions which are majorized by $g$,
\[%\begin{equation}\label{CEdefn}\tag{CE}
u(x) = \sup\{  v(x) \mid v \text{ convex, } v(y) \leq g(y) \text{ for all } y  \in \R^n \}.
\]%\end{equation}
It was recently observed~\cite{ObermanCE} that the convex envelope is the solution of a partial differential equation. 

The equation for the convex envelope, $u$, of the function $g:\R^n \to \R$, is 
\begin{equation}\label{obstacle}
\max \left\{  u(x) - g(x), -\lambda_1[u](x)   \right\} = 0.
\end{equation}
Here $\lambda_1[u](x)$ is the smallest eigenvalue of the Hessian $D^2u(x)$.  
 See Figure~\ref{fig1}. 
\begin{figure}
\scalebox{.75}{
\includegraphics{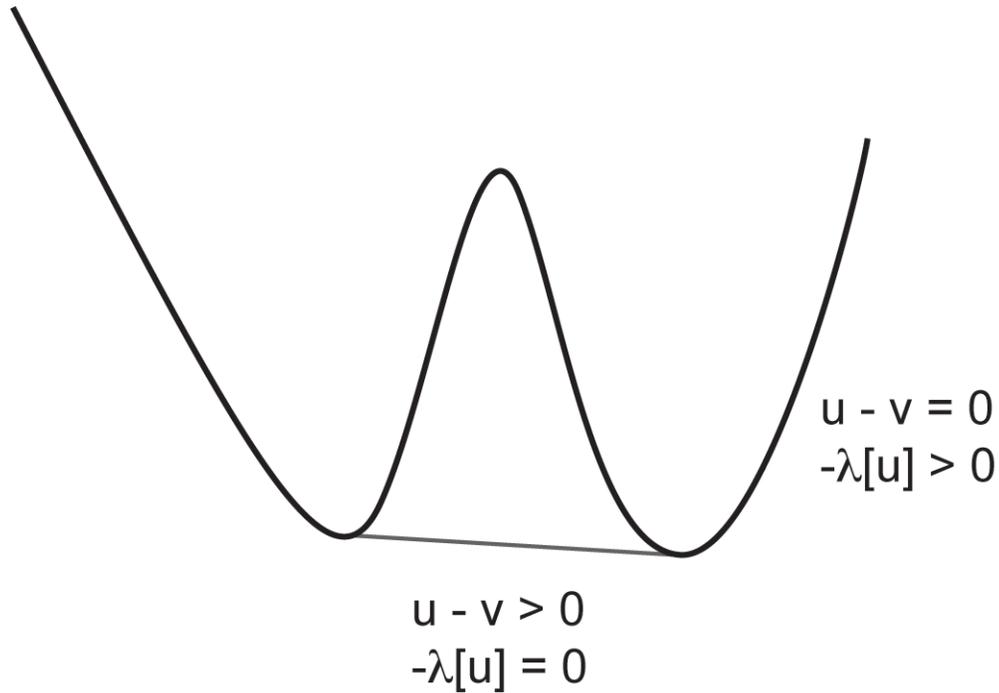}
}
\caption{Illustration of the equation}\label{fig1}
\end{figure}

The equation~\eqref{obstacle} is a combination of a fully nonlinear second order PDE,  $\lambda_1[u](x)$, and an obstacle term.   In dimension $n=1$, this equation coincides with  the classical obstacle problem.   But the differential operator is fully nonlinear in dimensions $n=2$ and higher and it is degenerate in all but one direction.

\subsection{Related results}
The possibility of an equation for the convex envelope was suggested by~\cite{GriewankRabier}.
A computational method for computing the convex envelope which used a related equation was perfomed in~\cite{vese}.  Methods for enforcing convexity constraints in variational problems have been studied as well~\cite{CarlierLRMaury}.  For more references on computational work, see~\cite{ObermanCENumerics}.

The regularity of the convex envelope $u(x)$ of a given function $g(x)$ has been studied by~\cite{KirchheimKristensen, GriewankRabier, Benoist}.   In this case, it has been shown that when the envelope function $g(x)$ is $C^{1,\alpha}$, the convex envelope is as well.
The analysis for the envelope problem is somewhat easier, since supporting hyperplanes will touch the function $g$ at some point where it is differentiable with matching derivatives.

In dimensions three or higher, the convex envelope of a function defined on a closed convex set need not be continuous up to the boundary~\cite{KruskalCounterExample}.

\section{A Partial Differential Equation for the Convex Envelope}
The convex envelope can be defined on  a domain  $\Omega$ in $\R^n$ even when $g(x)$ is defined only on part of the domain.   
This is achieved simply by setting $g$ to be infinity outside the domain of definition.

Although much of our arguments still hold for closed sets $C$, there is the possibility that the envelope is infinite in parts of the domain.  Since here our interest is in the relationship of the convex envelope with elliptic PDEs,  we restrict our attention to case where $C$ is $\partial \Omega$, the boundary of $\Omega$, and assume $g(x)$ is defined and bounded on $C$.

Then we define the convex envelope of the boundary data $g(x)$ to be
\begin{equation}\label{CEdefn}\tag{CE}
u(x) = \sup\{  v(x) \mid v \text{ convex, } v(y) \leq g(y) \text{ for all } y  \in \partial \Omega \}
\end{equation}
In this last definition, the set of convex functions $v$ can be replaced with the set of affine functions, since a convex function is the supremum of its supporting planes. 

The function defined in~\eqref{CEdefn} will not usually be continuous up to the boundary, since $g(x)$ may be concave at some points.  
However, we cannot simply take a convexification of $g$ and expect the same result, since $g$ is defined on lower dimensional set.

We now focus our attention on the Dirichlet problem~\eqref{lam},\eqref{D}.
The problem~\eqref{lam},\eqref{D} can be obtained from~\eqref{obstacle} by setting $g(x)$ to be infinite on the interior of the domain.

Analogous results for concave functions can be made by replacing $\lambda_1$ with $\lambda_n$, the largest eigenvalue.

\subsection{Convexity}
For basic definitions of convexity, see~\cite{BoydBook} or the appendix of~\cite{Evansbook}. 
For more on convex analysis see the textbooks ~\cite{BertsekasBook} or~\cite{Rockafellar}.
The function $u:\R^n\to \R$ is convex if for all $x,y\in R^n$ and  $0\le t \le 1$
\[
u( tx + (1-t) y) \leq t u(x) + (1-t) u(y).
\]
If $u:\R^n\to \R$ is convex, then for each $x_0 \in \R^n$ there exists a \emph{supporting hyperplane} to $u$ at $x_0$. 
In other words, there exists an 
affine function $P(x)\equiv u(x_0) + p\cdot (x-x_0)$ such that
\bq\label{supporting}\tag{SH}
\begin{cases}
u(x) \geq P(x)    &\text{ for all } x \in \R^n\\
u(x_0) = P(x_0) 	   &
\end{cases}
\eq
If $u$ is differentiable at $x$, then $\nabla u(x_0) = \nabla P(x_0)$ and $P$ is unique; if not, there may be more than one supporting hyperplane.

For twice-differentiable functions, convexity can be characterized by the local condition that the Hessian of the function be nonnegative definite, 
\[
D^2 u(x) \ge 0  \text{ for all } x \in \R^n   
%i.e.  \lambda_1[u](x) \ge 0 \text{ for all } x  \in \R^n
\quad \text{ if and only if } \quad  u \text{ is convex. }
\]
Note that the first condition is equivalent to $\lambda_1[u](x) \ge 0$.  
The characterization is valid even for continuous functions, provided the equation is interpreted in the viscosity sense, as will be explained in the next section. 

\subsection{Viscosity Solutions}
The theory of viscosity solutions is a powerful tool for proving existence, uniqueness and stability results for fully nonlinear elliptic equations.  The standard reference is the User's Guide~\cite{CIL}.
A readable introduction is the Primer, by Crandall \cite{CrandallPrimer}.  
The theory applies to scalar equations of the form $F[u] \equiv F(D^2u(x), Du(x), u(x),x)$, which are nonlinear and elliptic, i.e. nondecreasing in the first argument.  
%The class is closed under the operations of taking the maximum or minimum of two equations.   
Solutions are \emph{stable} in the sense that if the equations $F^\e$ converge to $F$, the corresponding solutions $u^\e$ converge to the solution $u$ of $F$, uniformly on compact subsets.  Uniqueness is a consequence of the \emph{comparison principle}: if $F[u] \geq F[v]$ in $\Omega$, with $u\geq v$ on $\partial \Omega$, then $u\geq v$ in $\Omega.$
Viscosity solutions can be constructed using Perron's method, 
\[
u(x) = \sup \{ \phi(x) \mid  \phi \text{ is a subsolution of }F[\phi] = 0   \}.
\]
This last construction will coincide in our case with the definition of the convex envelope~\eqref{CEdefn}.
The definition of viscosity solutions for~\eqref{lam} follows.
 (In the interest of clarity, we omit a standard technical argument which requires working with upper and lower semi-continuous envelopes, see Section 4 in~\cite{CIL}.)
\begin{defn}\label{lambdadefn}
%[viscosity interpretation of convex functions]
\label{defn:viscConvex}
%The upper semicontinuous function $u$ is a viscosity subsolution of \eqref{lam} if 
The continuous function $u$ is a viscosity subsolution of \eqref{lam} if
\[
u(x) \leq g(x), \quad \text{ for } x \in \partial \Omega
\]
and if for every twice-differentiable function $\phi(x)$,  %(touch from above)
\bq\label{lambdasub}
-\lambda_1[\phi](x) \leq 0, \quad \text{ when $x$ is a local max of $u-\phi$. }
\eq
The continuous function $u$ is a viscosity supersolution of \eqref{lam}  if 
\[
u(x) \ge g(x), \quad \text{ for } x \in \partial \Omega
\]
and if for every twice-differentiable function $\phi(x)$,  %(touch from below)
\bq\label{lambdasup}
-\lambda_1[\phi](x) \geq 0, \quad \text{ when $x$ is a local min of $u-\phi$. }
\eq
The function $u$ is a viscosity solution of \eqref{lam} if it is both a subsolution and a supersolution.
\end{defn}

The following theorem was proved in~\cite{ObermanCENumerics}.

\begin{thm}\label{thm:char}
The  continuous function 
$u: \R^n \to \R$ is convex if and only if it is a viscosity subsolution of~\eqref{lam}.
\end{thm}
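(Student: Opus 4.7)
The plan is to prove both implications, with the reverse one carrying the substantive content.

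For the forward direction ($u$ convex implies viscosity subsolution), let $\phi \in C^2$ be such that $u-\phi$ has a local max at $x_0$, and fix any direction $v \in \R^n$. Restricting to the line $t\mapsto x_0+tv$, the 1D function $f(t) := u(x_0+tv)$ is convex while $g(t) := \phi(x_0+tv)$ is $C^2$ with $g \geq f$ near $0$ and $g(0)=f(0)$. This touching forces $f$ to be differentiable at $0$ with $f'(0)=g'(0)$; the subgradient inequality then gives $g(t)-g(0)-g'(0)t \geq f(t)-f(0)-g'(0)t \geq 0$, so Taylor expansion yields $g''(0) = v^\top D^2\phi(x_0) v \geq 0$. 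Since $v$ was arbitrary, $\lambda_1[\phi](x_0) \geq 0$.

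For the converse, my strategy is to reduce to one dimension, since convexity on $\R^n$ is equivalent to convexity on every line. Fix a line $\ell$, write $w := u|_\ell$, and I will show (i) $w$ is a 1D viscosity subsolution of $w''\geq 0$, and (ii) any such continuous $w$ is convex. For (i), let $\psi\in C^2(\R)$ and suppose $w-\psi$ has a local max at $t_0$. After replacing $\psi$ by $\psi+\sigma(t-t_0)^2$ and letting $\sigma\to 0^+$ at the end, I may assume the max is strict. In coordinates $(t,x')$ adapted to $\ell$, lift $\psi$ to $\phi_M(t,x') := \psi(t) + M|x'|^2$. On a small cylinder around $(t_0,0)$, the term $-M|x'|^2$ in $u-\phi_M$ forces the maximum to be attained at an interior point $(t_M,x'_M)$, and strictness of the 1D max forces $(t_M,x'_M) \to (t_0,0)$ as $M\to\infty$. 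The subsolution hypothesis gives $\lambda_1[\phi_M](t_M,x'_M) \geq 0$; since $D^2\phi_M = \mathrm{diag}(\psi''(t),2M,\dots,2M)$, for $M$ large this reads $\psi''(t_M)\geq 0$, and passing to the limit gives $\psi''(t_0)\geq 0$.

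For (ii), if $w$ were not convex then some chord $L$ over an interval $[a,b]$ would satisfy $w(t^*) > L(t^*)$ at an interior point $t^*$. The quadratic $\psi_\eps(t) := L(t) + \eps(t-a)(b-t)$ satisfies $\psi_\eps'' = -2\eps < 0$ and matches $w$ at $a$ and $b$. If $w-\psi_\eps$ attained a positive maximum at some interior point $t^{**}$, then $\psi_\eps + \max(w-\psi_\eps)$ would be a smooth function touching $w$ from above at $t^{**}$, forcing $\psi_\eps''(t^{**}) \geq 0$ by the subsolution property and contradicting $\psi_\eps''\equiv -2\eps$; hence $w\leq \psi_\eps$ on $[a,b]$, and sending $\eps \to 0^+$ contradicts $w(t^*) > L(t^*)$.

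The main obstacle is step (i): the lift $\phi_M$ is completely flat in the $\ell$-direction, so all of the content of $\lambda_1[\phi_M]\geq 0$ must be extracted through the scalar $\psi''$, which is only legitimate because the strict-local-max reduction pins the contact points $(t_M,x'_M)$ onto $\ell$ in the limit. An alternative route would be to sup-convolve $u$ to a semi-convex approximant, apply Alexandrov a.e.\ differentiability to get $D^2u^\eps\geq 0$ almost everywhere, and then pass to the limit — but the 1D reduction above is more elementary and self-contained.
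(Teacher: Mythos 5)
Your proposal is mathematically sound. Note that the paper itself does not give a proof of Theorem~\ref{thm:char}; it defers to the reference \cite{ObermanCENumerics}, so there is no in-text argument to compare against line by line. I checked both directions of your argument carefully. The forward direction is correct, though you take a slightly longer route than necessary: once $u-\phi$ has a local max at $x_0$ and (WLOG) $u(x_0)=\phi(x_0)$, the midpoint convexity inequality $u(x_0+hv)+u(x_0-hv)-2u(x_0)\ge 0$ combined with the touching condition gives $\phi(x_0+hv)+\phi(x_0-hv)-2\phi(x_0)\ge 0$ directly, and dividing by $h^2$ and letting $h\to 0$ yields $v^{\top}D^2\phi(x_0)v\ge 0$ without needing to establish differentiability of the restricted convex function or invoke the subgradient inequality. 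Your version is nevertheless correct. For the converse, your two-step plan (restriction of the viscosity subsolution property to lines via the vertical lift $\phi_M(t,x')=\psi(t)+M|x'|^2$, followed by the one-dimensional chord comparison against $L(t)+\eps(t-a)(b-t)$) is a valid and self-contained route; the technical heart, namely that adding $\sigma(t-t_0)^2$ makes the one-dimensional max strict, which in turn pins the contact points $(t_M,x'_M)$ onto $\ell$ as $M\to\infty$ so that the interior-max hypothesis is satisfied, is handled correctly, and the observation that $\lambda_1[\phi_M]=\min(\psi''(t_M),2M)\ge 0$ forces $\psi''(t_M)\ge 0$ is right. The sup-convolution/Alexandrov alternative you mention at the end is also standard and somewhat shorter to state, at the cost of invoking heavier machinery. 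In short: no gaps, a correct and more elementary argument than the heavy-machinery alternative, but one that cannot be matched against the paper's own proof since the paper only cites it.
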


An immediate consequence of the previous theorem and Perron's method is the characterization of the convex envelope of the boundary data  as a viscosity solution of~\eqref{lam}.   This result was given in more detail in~\cite{ObermanCE} for the convex envelope of a function (rather than the Dirichlet data). 

Compare the Perron formula
\[u(x) = \sup 
\{ v(x) \mid v(x) \text{ is a {continuous} subsolution of~\eqref{lam} } \}.
\]
and the definition~\eqref{CEdefn}.  Use the definition of viscosity solutions and Theorem~\ref{thm:char} to see that the supremum is over the same set of functions.  (Here semi-continuity of the function $u(x)$ defined above follows from the definition.   Continuity can be established by an additional technical argument, see Theorem 4.1 in~\cite{CIL}.)

\subsection{Application to Monge-Amp\`ere equation with zero right hand side}
The Monge-Amp\`ere equation with zero right hand side is 
\[
\det(D^2 u) = 0
\]
In order for the equation to be elliptic, there is additional constraint that $u$ be convex.
While the next result is probably not new,  the characterization of the convex envelope~\eqref{lam} gives a clear and concise proof and statement.

\begin{lemma}
Viscosity solutions of the  elliptic Monge-Amp\`ere equation with zero right hand side
are also viscosity solutions of~\eqref{lam}
\end{lemma}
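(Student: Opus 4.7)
The plan is to verify the subsolution and supersolution conditions for \eqref{lam} separately, exploiting the fact that ellipticity of the Monge-Amp\`ere operator forces $u$ to be convex on the cone where the operator is degenerate elliptic.

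For the subsolution inequality, I would first observe that any viscosity solution of the elliptic Monge-Amp\`ere equation is convex by the very formulation of ellipticity (the operator $\det(D^2u)$ is degenerate elliptic precisely on the cone of nonnegative symmetric matrices, corresponding to convex functions). Theorem~\ref{thm:char} then applies directly and yields that $u$ is a viscosity subsolution of \eqref{lam}.

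For the supersolution inequality, I would argue by contradiction. Let $\phi$ be a $C^2$ test function for which $u-\phi$ attains a local minimum at some $x_0 \in \Omega$, and suppose toward contradiction that $\lambda_1[\phi](x_0) > 0$. Since $\lambda_1$ is by definition the smallest eigenvalue of $D^2\phi(x_0)$, every eigenvalue is then strictly positive, so $D^2\phi(x_0)$ is strictly positive definite and in particular $\det(D^2\phi(x_0)) > 0$. But the viscosity supersolution property for $\det(D^2 u) = 0$ demands $\det(D^2\phi(x_0)) \leq 0$ at any local minimum of $u-\phi$, a contradiction. Hence $-\lambda_1[\phi](x_0) \geq 0$, which is exactly \eqref{lambdasup}.

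The main obstacle, such as it is, is essentially a definitional one: one must settle on the correct viscosity formulation of the elliptic Monge-Amp\`ere equation (restricted to the convex cone) so that the supersolution property genuinely reads $\det(D^2\phi) \leq 0$ at points where $\phi$ touches $u$ from below. Once the definitions are aligned, the argument collapses to the trivial linear-algebra observation that for symmetric matrices $\lambda_1 > 0$ is equivalent to positive definiteness and therefore implies $\det > 0$. The reverse implication fails (in even dimensions $\lambda_1 \leq 0$ need not imply $\det \leq 0$), which is consistent with the lemma being stated in only one direction.
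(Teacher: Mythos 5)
Your proof is correct, and it takes a genuinely different and more careful route than the paper's. The paper argues at the level of the operator: it writes $\det(D^2u) = \lambda_1[u]\cdots\lambda_n[u]$, notes that the convexity constraint forces all eigenvalues nonnegative, and concludes that $\det(D^2u)=0$ with $u$ convex is equivalent to $\lambda_1[u]=0$; this is essentially an algebraic identity and leaves implicit the translation into the viscosity-solution framework. You instead verify the two viscosity inequalities separately: for the subsolution side you invoke Theorem~\ref{thm:char} (convexity $\iff$ viscosity subsolution of~\eqref{lam}), and for the supersolution side you run the clean contradiction argument that $\lambda_1[\phi](x_0)>0$ forces $D^2\phi(x_0)>0$ and hence $\det(D^2\phi(x_0))>0$, violating the Monge--Amp\`ere supersolution property. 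You also correctly flag that the supersolution step depends on the precise formulation of viscosity solutions for the convexity-constrained Monge--Amp\`ere operator; since your test function has $D^2\phi(x_0)>0$, it is automatically admissible under any of the standard restricted-test-function conventions, so the argument goes through. What each approach buys: the paper's proof is shorter and makes the operator-level equivalence transparent, while yours is the one that actually lives inside the viscosity framework and would survive scrutiny if a referee pressed on definitions. One small note on your closing aside: the ``reverse implication fails in even dimensions'' remark concerns the unconstrained determinant; on the convex cone (where the elliptic Monge--Amp\`ere equation lives) $\lambda_1[u]=0$ does imply $\det(D^2u)=0$, so the lemma's one-directional statement is a choice of emphasis rather than a necessity forced by sign considerations.
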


\begin{proof}
Using $\lambda_1[u] \geq 0$ to enforce the convexity constraint, and writing
\[
\lambda_1[u] \leq \dots \leq \lambda_n u[u]
\]
for the eigenvalues of the Hessian of $u$ and noting that 
\[
\det(D^2u) = \lambda_1[u] \cdot \dots \lambda_n[u]
\]
we see that 
\[
\det(D^2 u) = 0, \text{ and } u \text{ convex }
\]
is equivalent to the simpler condition
\[
\lambda_1[u] = 0.
\qedhere
\]
\end{proof}

%%%%%%%%%%%%%%%%%%%%%%%%%%

\section{Geometry of the Contact Set}
The geometrical characterization of the convex envelope~\eqref{obstacle} 
suggests that solutions of~\eqref{lam}  are convex, but nowhere strictly convex.    By studying the geometry of solutions we can give a rigorous meaning to this statement.     

It is easy to build examples to show that solutions of \eqref{lam}  need not be differentiable; for example $u(x,y)  = |x|$ is a solution.  
Neither must they be continuous up to the boundary.  
For example, when  $\Omega$ is a square in the plane, centered at the origin
and $g(x,y) = x^2 - y^2$, the solution of \eqref{lam} is $u(x,y) = x^2 -1$.

For classical solutions, by which we mean twice continuously differentiable ($C^2$),  the principal curvatures of the graph of the solution must be $0, \lambda_2$, with $\lambda_2 \ge 0$.  This means that at every point in the domain, solutions are flat in the direction of the first eigenvector of the Hessian. 

However for nonclassical solutions, it is possible that this direction is changing.  The direction of flatness has implications for the regularity of solutions and for the stochastic control interpretation.  

Nonclassical example solutions have the property that at any point $x$ there is a direction $d = d(x)$ on which the solution restricted to the line $\ell(s) = x + ds$ is linear.
\begin{quote}
Is it true that for all points $x$ in the domain, there is a line segment containing $x$  on which the solution is \emph{linear}?
\end{quote}
To answer this question, we investigate the geometry of the contact set.

%\subsection{The solution is locally linear in certain direction}

\begin{defn}[Contact Set]
Let $u$ be a solution of~\eqref{lam} and let $P^x$ be a supporting plane~\eqref{supporting} to $u$ at $x$.  Define the contact set to be the set of points where the supporting plane touches the graph of the function.
\bq\label{contact_set}
C^x = \{ y \in \Omega \mid  u(y) = P^x(y) \}.
\eq
\end{defn}
The contact set may also depend on the choice of the plane $P^x$, if there is more than one supporting hyperplane at $x$.

\subsection{Statement and Proof of the Theorem}

\begin{thm}\label{lem:CS}\label{thm:contact}
Let $\Omega$ be a compact domain in $\R^n$ and $g :\bdary \Omega \to \R$. Let $u : \Omega \to \R$ be its convex envelope. For any point $x$ inside $\Omega$, let $P^x$ be its supporting plane and $C^x$ its contact set.  Then
\begin{enumerate}
\item $C^x$ is convex.
\item $C^x$ intersects the boundary of $\Omega$.
\item \label{lem3}$C^x$ contains a line segment connecting $x$ to a point on the boundary of $\Omega$.
\item $C^x$ is the convex hull of $C^x \cap \bdary \Omega$.
  \end{enumerate}
 \end{thm}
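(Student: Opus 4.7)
The plan is to prove the four parts in order, with (1) and (3) being essentially immediate, (2) the first real argument, and (4) requiring a Hahn--Banach separation combined with a convex-perturbation trick.

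For (1), if $y_1, y_2 \in C^x$ and $y_t := t y_1 + (1-t) y_2$ for $t \in [0,1]$, then convexity of $u$ together with the fact that $P^x$ is affine gives $P^x(y_t) \leq u(y_t) \leq t u(y_1) + (1-t) u(y_2) = P^x(y_t)$, so $y_t \in C^x$. For (2), I argue by contradiction: assuming $C^x \cap \partial\Omega = \emptyset$ forces $u > P^x$ pointwise on the compact set $\partial\Omega$; since the convex function $u$ is lower semicontinuous, this strict positivity is uniform, i.e.\ $u \geq P^x + \delta$ on $\partial\Omega$ for some $\delta > 0$. Then $P^x + \delta$ is an affine underestimator of $g$ on $\partial\Omega$, so the supremum characterization~\eqref{CEdefn} of the convex envelope forces $u(x) \geq P^x(x) + \delta = u(x) + \delta$, a contradiction. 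For (3), $x \in C^x$ by definition and (2) supplies some $z \in C^x \cap \partial\Omega$, so convexity from (1) puts the whole segment $[x,z]$ into $C^x$.

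For (4) the goal is to prove $C^x = S$, where $S := \mathrm{conv}(C^x \cap \partial\Omega)$ is compact and convex. I assume for contradiction that some $y \in C^x$ lies outside $S$ and apply Hahn--Banach: there exist a linear functional $\phi$ and a constant $c$ with $\max_S \phi < c < \phi(y)$. I then consider the convex competitor $w := P^x + \epsilon(\phi - c)_+$, which is convex as the sum of an affine function and the convex function $(\phi - c)_+$. Once one shows $w \leq g$ on $\partial\Omega$ for sufficiently small $\epsilon > 0$, the definition of the convex envelope gives $w \leq u$ on $\Omega$, while at the point $y$ we have $w(y) = P^x(y) + \epsilon(\phi(y) - c) > P^x(y) = u(y)$, the desired contradiction.

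The hardest step, and the one I expect to be the main obstacle, is verifying the boundary inequality $w \leq g$. The added term contributes only on the closed set $K := \{z \in \partial\Omega : \phi(z) \geq c\}$, which by construction is disjoint from $C^x \cap \partial\Omega$ (points of $C^x \cap \partial\Omega$ lie in $S$, where $\phi \leq \max_S \phi < c$). On $K$ I need $g - P^x$ to be strictly positive: if $g(z) = P^x(z)$ at some $z \in K$, then the chain $P^x(z) \leq u(z) \leq g(z) = P^x(z)$ collapses, placing $z$ in $C^x \cap \partial\Omega$ and contradicting $z \in K$. Compactness of $K$ together with continuity of $g$ then provides a uniform lower bound on $g - P^x$ there, while $\phi - c$ is uniformly bounded above on $K$, so any sufficiently small $\epsilon$ gives $\epsilon(\phi - c)_+ \leq g - P^x$ on all of $\partial\Omega$, completing the argument.
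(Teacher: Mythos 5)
Your proof is correct. Parts (1) and (3) match the paper (your (1) is even slightly more complete, treating all $t\in[0,1]$ rather than just midpoints), and your part (4) follows the same separation strategy: pick a separating linear functional, perturb $P^x$ by a small multiple of it, and verify the perturbed function stays below $g$ on $\partial\Omega$ using the fact that $g-P^x$ is bounded away from zero on the relevant compact piece of the boundary (which it is, precisely because any boundary zero of $g-P^x$ would lie in $C^x\cap\partial\Omega$ and hence in the separated convex hull). The only meaningful difference there is that the paper takes the competitor to be the affine function $P^x+\eps L$, whereas you take $P^x+\eps(\phi-c)_+$, which is convex but not affine; yours has the small convenience that the perturbation vanishes off $K$, so the inequality need only be checked on $K$, while the paper must (trivially) also note $\eps L<0\le g-P^x$ on $\{L<0\}$. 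Where you genuinely diverge is part (2). The paper argues by building a local bump: it picks a neighborhood $D$ of $C^x$ avoiding $\partial\Omega$, and replaces $u$ by $\max(u,P^x+\eps/2)$ on $D$, contradicting maximality. Your argument instead notes that if $C^x$ misses $\partial\Omega$, then the lower semicontinuous function $u-P^x$ is strictly positive on the compact set $\partial\Omega$, hence bounded below by $\delta>0$ there, so $P^x+\delta$ is itself an admissible competitor in~\eqref{CEdefn} and forces $u(x)\ge u(x)+\delta$. This is shorter, sidesteps the neighborhood/gluing construction (which in the paper is stated a bit loosely, since the minimum over $\partial D$ alone does not by itself guarantee $\tilde u=u$ on all of $\overline\Omega\setminus D$), and has the conceptual advantage of reusing the same affine-competitor mechanism you deploy in (4), making the whole proof more uniform.
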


\begin{proof}
(1) For any $y, z \in C^x$, let $w = (y+ z)/2$.   Since $u$ is convex, 
\[
u(w) \leq \frac{u(y) + u(z)} 2 = \frac { P^x(y) + P^x(z)} 2
\]
On the other hand, since $w \in C^x$, and $P^x$ is a supporting hyperplane, 
\[
u(w) \ge P(w) = \frac { P^x(y) + P^x(z)} 2.
\]
So $C^x$ is convex.

(2) Suppose $C^x$ does not intersect the boundary of $\Omega$.  Since $C^x$ is convex, it is closed, so there is a small neighborhood $D$ of $C^x$ which also does not intersect the boundary of $\Omega$.   Since $P^x$ is a supporting hyperplane, $u > P^x$ in $D\setminus C^x$.  Let 
\[
\e = \min_{ y \in \partial D } u(y) - P^x(y).
\]
Then $\e > 0$.  So define $\tilde u(y) = \max ( u(y), P^x(y) + \e/2 )$.  Then $\tilde u(y) > u(y)$ at some point $y$ in $D\setminus C^x$, but $\tilde u(y) = u(y)$ outside $D$.  This contradicts the definition of $u$,~\eqref{CEdefn}.

(3) Suppose $C^x$ intersects the boundary of $\Omega$ at $y$.  Then since $C^x$ is convex, there must be a line segment from $x$ to $y$.

(4) Let $D$ be the convex hull of $C^x \cap \bdary \Omega$. Since $C^x$ is closed, so is $D$. Moreover, since $C^x$ is convex, $D \subset C^x$. We are left to prove the opposite inclusion.

Let $y \in \Omega \setminus D$. We will show that $y \notin C^x$. Since $C^x$ is closed and convex, then by the Plane Separation Theorem~\cite{BertsekasBook} there is an affine function $L$ such that $L(y)>0$ and $L<0$ in $D$.
 
Since $L<0$ in $D$, then $g-P^x$ is strictly positive in the compact set $\bdary \Omega \cap \{L \geq 0\}$. Therefore, there is a $\delta>0$ such that $g-P^x>\delta$ in $\bdary \Omega \cap \{L \geq 0\}$. Thus, for $\eps>0$ small enough, $P^x + \eps L < g$ on $\bdary \Omega$.

Since $u$ is above any affine function that is below $g$ on $\bdary \Omega$, then $u(y) \geq P^x(y) + \eps L(y) > P^x(y)$. So $y \notin C^x$, which finishes the proof.
\qedhere

\end{proof}

Let $x \in C^x$, then by the last part of the theorem, 
\bq\label{xk}
x \in \text{conv}[y_1,\dots, y_k], \quad \text{ for } y_i \in \partial \Omega
\eq
for  $i = 1,\dots, k$ with $2 \le k \le n$
(where $\text{conv}[y_1,\dots, y_k]$ means  the convex hull of  $y_i$).
This means that $u$ is affine on a $(k-1)$ dimensional set, which contains $x$ in its relative interior.

\subsection{Consequences of the Geometry}
Theorem~\ref{thm:contact} means that, even if the solution is not differentiable, it always satisfies~\eqref{lam} in an (almost) classical sense.  Recall the definition of the second directional derivative.

\begin{defn}[Second directional derivative]
Given a unit vector $|v| = 1$, the second derivative of $u$ in the direction $v$ is given by
\[
\frac{d^2 u}{dv^2} \equiv   \lim_{h,k \to 0^+} 
\frac{ h}{h+k} \left (  \frac{ u(x+vk) - u(x) }{k} \right) 
+
\frac{ k}{h+k} \left (  \frac{ u(x) - u(x-vh) }{h} \right) 
\]
\end{defn}

\begin{cor}
Let $u(x)$ be the convex envelope of the boundary data, defined on a domain $\Omega$.  
Then 
\[
\lambda_1[u](x) = \min_{|v| = 1} \frac{d^2 u}{dv^2}(x) = 0
\]
for all points $x$ inside $\Omega$.
\end{cor}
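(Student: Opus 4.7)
The plan is to combine part (4) of Theorem~\ref{thm:contact} with the convexity of $u$ to sandwich the minimum between $0$ and $0$, and then identify it with $\lambda_1[u](x)$ via the viscosity-solution property.

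First, I would establish $\min_{|v|=1}\frac{d^2u}{dv^2}(x) \le 0$. Fix an interior point $x \in \Omega$ and a supporting plane $P^x$. By Theorem~\ref{thm:contact}(4), $C^x = \text{conv}(C^x \cap \partial\Omega)$, and every extreme point of a convex hull lies in the generating set. Hence every extreme point of $C^x$ lies on $\partial\Omega$, and since $x$ is in the interior of $\Omega$, it cannot be an extreme point of $C^x$. Therefore $x$ sits in the relative interior of some non-degenerate segment $[a,b] \subset C^x$. On this segment $u \equiv P^x$, so for $v = (b-a)/|b-a|$ the one-variable restriction $t \mapsto u(x+tv)$ is affine on a two-sided neighborhood of $t=0$. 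The difference quotients in the definition then all reduce to $\nabla P^x \cdot v$ and their combination collapses to $\frac{d^2 u}{dv^2}(x) = 0$.

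Next, the lower bound $\frac{d^2 u}{dw^2}(x) \ge 0$ for every unit $w$ follows directly from the convexity of $u$: restricting $u$ to any line through $x$ yields a convex function of one variable whose one-sided derivatives exist and are monotone, and whose symmetric second difference is nonnegative in the limit. Combined with the upper bound above, the minimum over unit $v$ is exactly $0$. To identify this minimum with $\lambda_1[u](x)$, I would invoke the variational characterization $\lambda_1[\phi](x) = \min_{|v|=1} v^\top D^2\phi(x)\,v$ valid for twice-differentiable $\phi$, together with the viscosity-solution property (Theorem~\ref{thm:char} and Perron's construction) which gives $\lambda_1[u](x) = 0$. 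Since the generalized directional second derivatives of $u$ have just been shown to exist at $x$ with minimum $0$, the classical identity between $\lambda_1$ and the minimum of directional second derivatives extends to this nonsmooth setting.

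The main obstacle is the first step: producing a \emph{two-sided} direction of flatness through $x$, rather than just the one-sided segment furnished by Theorem~\ref{thm:contact}(3). Without two-sided affineness, $t \mapsto u(x+tv)$ could in principle have a convex corner at $t=0$, inflating its second derivative to $+\infty$ and making the claimed equality fail. The decisive observation is that part (4) of the theorem, together with the elementary extreme-point characterization of convex hulls, promotes the one-sided statement to a two-sided one by forcing the interior point $x$ to be non-extreme in $C^x$; everything else is then a short combination of convexity and the viscosity definition.
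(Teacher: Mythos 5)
Your proof is correct and follows essentially the same route as the paper: both use part~(4) of Theorem~\ref{thm:contact} to produce a two-sided direction through $x$ along which $u$ agrees with the supporting plane $P^x$, and both appeal to convexity for the lower bound and to the viscosity characterization to identify $\lambda_1[u](x)$ with the directional minimum. The one place you do more work than the paper is worth noting: the paper asserts via~\eqref{xk} that $x$ lies in the \emph{relative interior} of a nondegenerate simplex spanned by boundary points without fully justifying why $x$ cannot be a vertex, whereas your extreme-point argument (extreme points of $\mathrm{conv}(C^x\cap\partial\Omega)$ lie in $C^x\cap\partial\Omega\subset\partial\Omega$, so the interior point $x$ is non-extreme) cleanly closes this gap.
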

\begin{proof}
For all $h, k$, using the definition of a second directional derivative as a limit, followed by the minimum characterization of the smallest eigenvalue of the Hessian, gives:
\[
\lambda_1[u](x) = \min_{|d| = 1}  \lim_{h,k \to 0^+} 
\frac{ h}{h+k} \left (  \frac{ u(x+vk) - u(x) }{k} \right) 
+
\frac{ k}{h+k} \left (  \frac{ u(x) - u(x-vh) }{h} \right) 
\]
Since $C^x$ is the convex hull of $C^x\cap \partial \Omega$, $u(x)$ is linear on a $k-1$ dimensional affine set containing $x$, which is given by~\eqref{xk}.
So choosing any direction $d$ which lies in this set, we get zero in the last equation.
\end{proof}

\section{Stochastic Control Interpretation}
In~\cite{ObermanCE} the convex envelope was reinterpreted as the value function of a stochastic control problem. For the Dirichlet problem~\eqref{lam}, an even simpler interpretation is available.
Our derivation is formal but can be made rigorous; we refer to~\cite{FlemingSonerBook} for a rigorous derivation of related equations.   
For readers not familiar with stochastic control problems, an introduction to optimal control and viscosity solutions of Hamilton-Jacobi equations can be found in~\cite{Evansbook}.

\subsection{The Control Interpretation}
Consider the controlled diffusion
\begin{equation}\label{diffusion2}
\begin{cases}
d\mathbf x(t) =  \sqrt{2}\, \ac(t)  dw(t),\\
\phantom{d}\x(0) =  x_{0}.
\end{cases}
\end{equation}
where $w$ is  a one-dimensional Brownian motion, and the control, $\ac(\cdot)$, is a mapping into unit vectors in $\R^n$.   The process stops when it reaches the boundary of the domain, at which point we incur a cost $g(x_\tau)$, where $\tau$ is the time when it reaches the boundary.
The objective is to minimize the expected cost
\[
 J(x,\ac(\cdot)) \equiv E^x[g(x_\tau)]
\]
over the choice of control $\ac(\cdot)$.
The value function is 
\[
u(x) = \min_{\ac(\cdot)} J(x,\ac(\cdot)),
\]
which describes the minimal expected cost at a given initial point $x$, assuming that an optimal control strategy was pursued.

In the general setting of stochastic control problems, the value function satisfies a fully nonlinear Partial Differential Equations, which is obtained by applying the Dynamic Programming Principle (DPP).
The DPP provides a link between nearby values of the value function by assuming a constant control pursued over an infinitesimal time interval.   A readable introduction to this principle in the deterministic case can be found in~\cite{Evansbook}.

Now apply the DPP to derive the equation for the value function.
One strategy is to fix $\ac(\cdot) = \theta$ to be constant,  and let the diffusion proceed for time $t$, thereafter following the optimal strategy.   This strategy costs $E^{x_0,\theta}[u(\x(s))] \equiv E^{x_0}[u(\x(s)) \mid \ac(\cdot) = \theta]$.
Minimizing over $\theta$ gives
\[
u(x_{0}) =  \min_{\theta} E^{x_0, \theta}[u(\x(t))]  + o(t).
\]
Using the definition of infinitesimal generator corresponding to the diffusion~\eqref{diffusion2}, with $\theta$ fixed (see e.g.~\cite{Oksendal}), gives
\begin{equation}\label{generatorL1}
\lim_{t\to 0} \frac{ E^{x,\theta}[u(\x(t))] - u(x) }{t} = \ac^T\, D^2 u(x)\, \ac.
\end{equation}
Using~\eqref{generatorL1} in the preceding equation and taking the limit $t\to 0$ yields 
\[
 -\inf_{|\ac| = 1}  \ac^T \, D^2 u(x)\, \ac = 0,
\]
along with $u= g$ on the boundary.
Finally, using the Raleigh-Ritz characterization of the eigenvalues 
\[
\lambda_1(M) = \min_{|x| = 1} x^T M x,
\]
for symmetric matrices, $M$, 
recovers~\eqref{lam}.

\newcommand{\dist} {\mathrm{dist}}

\section{$C^{1,\alpha}$ Regularity}

In this section we prove a regularity result.
We begin with an example to show that the interior regularity result is optimal in the sense that it cannot be continued up to the boundary.

We will use the notation
\begin{align*}
B_r = \{ x \in \R^n \mid |x|^2 < r \},
&&
S_r = \{ x \in \R^n \mid |x|^2 = r \}.
\end{align*}
for the ball of radius $r$, and the sphere of radius $r$, respectively.

\subsection{Optimal interior regularity example}\label{ex:interiorreg}
%\begin{example}
Let $\Omega = B_1 \subset \R^2$, the unit ball in two dimensions.
Consider the function 
\[
u(x,y) = - (1+x)^{1-\e}
\]
for $\e>0$ small.  The function is convex and continuous on $B_1$.
Writing 
\[
u(x,y) = f(r, \theta) = -(1 + r\cos\theta)^{1-\e}
\]
in polar coordinates, we see that $u$ restricted to the unit circle is $C^{1, 1-2\e}$,  since near $\theta = \pm \pi$ the function behaves like  $\theta^{2 - 2\e}$.
Compute the derivative, 
\[
\frac{\partial}{ \partial \theta} f(1,\theta) = (1-\e)\sin\theta (1 + \cos\theta)^{-\e} 
\]
which has a singularity about $\theta = \pm \pi$ which is on the order of 
$\theta^{1-2\e}$.  

Notice that the function $\frac {\partial}{ \partial \x}u(x,y)$ has a singularity at $(-1,0)$.  So the function $u(x,y)$ is not $C^{1,\alpha}$ up to the boundary of $B_1$ for any $\alpha>0$.

Thus we have provided the desired counterexample to the regularity of the solution of~\eqref{lam},\eqref{D} near the boundary.

%%%
%%%
%%%
\subsection{Proof of the Regularity result}
Let $\Omega = B_1 \subset \R^n$ and let  
$g : S_1 \subset \R^n \to \R$ be a  $C^{1,\alpha}$ function.
Consider~\eqref{lam},\eqref{D} 
\[
\begin{cases}
\lambda_1[u](x) = 0, & \text{ for $x$ in } B_1
\\
u  = g, &\text{ for $x$ on } \bdary B_1 = S_1 
\end{cases}
\]
This equation is understood in the viscosity sense.  
Reinterpreting the definition from the Perron construction gives the characterization
\[
\begin{cases}
u(x)= \max \set{L(x) : L(x) = A \cdot x + b , L \leq g \text{ on } \bdary B_1 }, &  \text{ for $x$ in } B_1
\\
u = g  &\text{ for $x$ on } \bdary B_1 = S_1 
\end{cases}
\]

We recall that if the boundary data $g$ is the restriction of a nonconvex function, then the solutions will not be continuous up to the boundary.

We next prove the interior regularity result.

\begin{thm} 
Consider~\eqref{lam},\eqref{D}, for $\Omega = B_1 \subset \R^n$.
If the boundary data $g$ is $C^{1,\alpha}$ on $\partial\Omega = S_1$ for some $\alpha \in (0,1]$, then the convex envelope $u$ is $C^{1,\alpha}$ in $B_{1/2}$. 
Moreover, the following estimate holds, 
\[ 
\norm{u}_{C^{1,\alpha}(B_{1/2})} \leq C \norm{g}_{C^{1,\alpha}(\bdary B_1)} 
\]
where the constant $C$ depends only on the dimension $n$ and on $\alpha$.
\end{thm}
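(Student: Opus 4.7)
The plan is to exploit the Perron representation
\[
u(x) \;=\; \sup\bigl\{L(x) : L \text{ affine},\ L \le g \text{ on } \partial B_1\bigr\}
\]
together with the contact-set geometry of Theorem~\ref{thm:contact}. For $x \in B_{1/2}$ fix a supporting plane $P^x(y) = u(x) + p_x \cdot (y - x)$ realizing the sup, and let $y_0^1,\dots,y_0^k \in C^x \cap \partial B_1$ be boundary contact points whose convex hull contains $x$ (Theorem~\ref{thm:contact}(3)--(4)). Each $y_0^i$ is at distance at least $1/2$ from $x$, so comparing $P^x$ and $g$ at pairs of antipodal boundary points on lines through $x$ immediately yields the preliminary Lipschitz bound $|p_x| \le C\,\|g\|_{L^\infty(\partial B_1)}$.

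The key local fact is a \emph{tangential matching} identity: since $g - P^x \ge 0$ on $\partial B_1$ and vanishes at each $y_0^i$, and $g \in C^{1,\alpha}(\partial B_1)$, the tangential gradient of $g - P^x$ along $\partial B_1$ vanishes at $y_0^i$. With outer normal $\nu(y_0^i) = y_0^i$, this reads
\[
p_x - (p_x \cdot y_0^i)\, y_0^i \;=\; \nabla_T g(y_0^i),
\]
so the slope $p_x$ is determined by $\nabla_T g$ at any single boundary contact point, up only to the scalar $p_x \cdot y_0^i$. By Theorem~\ref{thm:contact}(3), $u$ equals $P^x$ along the whole segment $[x, y_0^i]$, so $y_0^i$ genuinely plays the role of a boundary source for the slope at $x$.

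The heart of the argument is the pointwise bound
\[
0 \;\le\; u(y) - u(x) - p_x \cdot (y - x) \;\le\; C\,\|g\|_{C^{1,\alpha}(\partial B_1)}\,|y - x|^{1 + \alpha}
\]
for $x, y \in B_{1/2}$. The lower bound is immediate from $u \ge P^x$. For the upper bound, take a supporting plane $P^y$ at $y$ with a boundary contact point $y_0'$, apply tangential matching at both $y_0(x)$ and $y_0'$, and use that $u$ is affine on the segments $[x, y_0(x)]$ and $[y, y_0']$. The $C^{0,\alpha}$ modulus of $\nabla_T g$ on $\partial B_1$ then controls $|p_y - p_x|$ by a constant times $|y_0(x) - y_0'|^\alpha$, while the distance at least $1/2$ from $B_{1/2}$ to $\partial B_1$ converts $|x - y|$ into a linear bound on $|y_0(x) - y_0'|$. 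Integrating gives the pointwise estimate, and a standard covering and scaling argument then yields $\|u\|_{C^{1,\alpha}(B_{1/2})} \le C\,\|g\|_{C^{1,\alpha}(\partial B_1)}$.

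\textbf{The main obstacle} is precisely the step of converting interior proximity $|x - y|$ into boundary proximity $|y_0(x) - y_0(y)|$. Contact points are not unique and may jump through an entire contact simplex, so the argument requires a careful selection of compatible representatives among the $y_0^i$'s; the normal component of $p_x$ must also be controlled, which is where the global constraint $P^x \le g$ on $\partial B_1$ re-enters. The separation from $\partial B_1$ enjoyed by points in $B_{1/2}$ is the essential geometric input: it is exactly what converts interior perturbations into proportional boundary perturbations. The example of Section~\ref{ex:interiorreg} shows that this separation cannot be dispensed with, so the restriction to $B_{1/2}$ is genuine and the proof must use it quantitatively.
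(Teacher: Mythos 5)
Your sketch takes a genuinely different route from the paper, but it contains a gap that you yourself flag and do not close, and that gap is substantive, not merely technical.

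Your strategy is to transport slope information from the boundary to the interior via the tangential-matching identity at boundary contact points, and then to argue that if $x, y \in B_{1/2}$ are close, their contact points $y_0(x), y_0(y) \in \partial B_1$ are close as well, so that Hölder continuity of $\nabla_T g$ controls $|p_x - p_y|$. Two things stand in the way. First, as you note, tangential matching only pins down $p_x$ modulo its normal component $p_x \cdot y_0^i$; recovering the full vector $p_x$ requires more, and with a single contact point (the case $k=2$ with the two points antipodal, or more degenerate configurations) there is no obvious way to get it. Second, and more seriously, the step ``interior proximity $|x-y|$ implies boundary proximity $|y_0(x) - y_0(y)|$'' is not established and is not clearly true in the form you need: contact sets are simplices, contact points can slide across them discontinuously as $x$ moves, and the selection problem you identify is exactly where the argument would have to do real work. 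You label this ``the main obstacle'' and leave it unresolved, so the proposal is not a proof.

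The paper sidesteps the contact-point tracking entirely. Its argument fixes two points $x_1, x_2 \in B_{1/2}$ with supporting planes $L_1, L_2$, normalizes by subtracting an affine function so that $A_1 + A_2 = 0$ and $\min_{B_1} M = 0$ where $M = \max(L_1, L_2)$, and works with the ridge hyperplane $P = \{L_1 = L_2\}$, which passes through $B_{1/2}$ and hence meets $S_1$ at a uniformly bounded angle. Then it takes $x_0$ realizing $\min_{S_1} g \geq 0$, parametrizes a great circle $\gamma$ through $x_0$ perpendicular to $P$, and compares $G(t) = g(\gamma(t))$ against the linear growth of the tent function $M$ away from $P$. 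Since $G'(t_1) = 0$ at the minimum while $G$ must grow linearly in $t$ to stay above $M$, the $C^\alpha$ modulus of $G'$ forces $g(x_0) \gtrsim |A_1 - A_2|^{1+1/\alpha}$; this is a lower bound on $u$ by comparison, which in turn bounds $\dist(x_i, P)$ and hence $|x_1 - x_2|$ from below by $c|A_1-A_2|^{1/\alpha}$. The whole argument uses only the global constraint $g \geq M$ on $S_1$ together with the one-dimensional $C^{1,\alpha}$ estimate along a single great circle — no contact points, no selection problem, and the normal component issue never arises. If you want to salvage your approach you would need to solve the selection problem you identified, and it is not clear that it has a clean solution; adopting the paper's tent-comparison argument is the more robust path.
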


\begin{proof}
~\,~
\subsubsection*{Preliminaries, Rescaling}
We can assume 
\[
\norm{g}_{C^{1,\alpha}(\bdary B_1)} = 1
\]
because if we can prove the estimate in this case, the general case will follow by rescaling. 

To prove the estimate, we will choose two points, 
 $x_1, x_2 \in B_{1/2}$,  and establish the estimate 
\[ 
|\grad u(x_1) - \grad u(x_2)| \leq C |x_1-x_2|^\alpha
\]
for some constant $C$ depending only on the dimension $n$ and $\alpha$.
First replace the gradients with the gradients of supporting hyperplanes.
So let
\bq\label{sh}
L_i = A_i \cdot (x- x_i) + b_i, \quad i=1,2 
\eq
be the supporting hyperplanes to $x_i$, respectively.
The argument will show that the supporting hyperplanes are unique.
Define 
\[
M(x) = \max(L_1(x),L_2(x)).
\]
See~\autoref{f:points}(a).

\begin{figure}[hbt] \begin{center}
\hspace{-.5in}\scalebox{.4}{\includegraphics{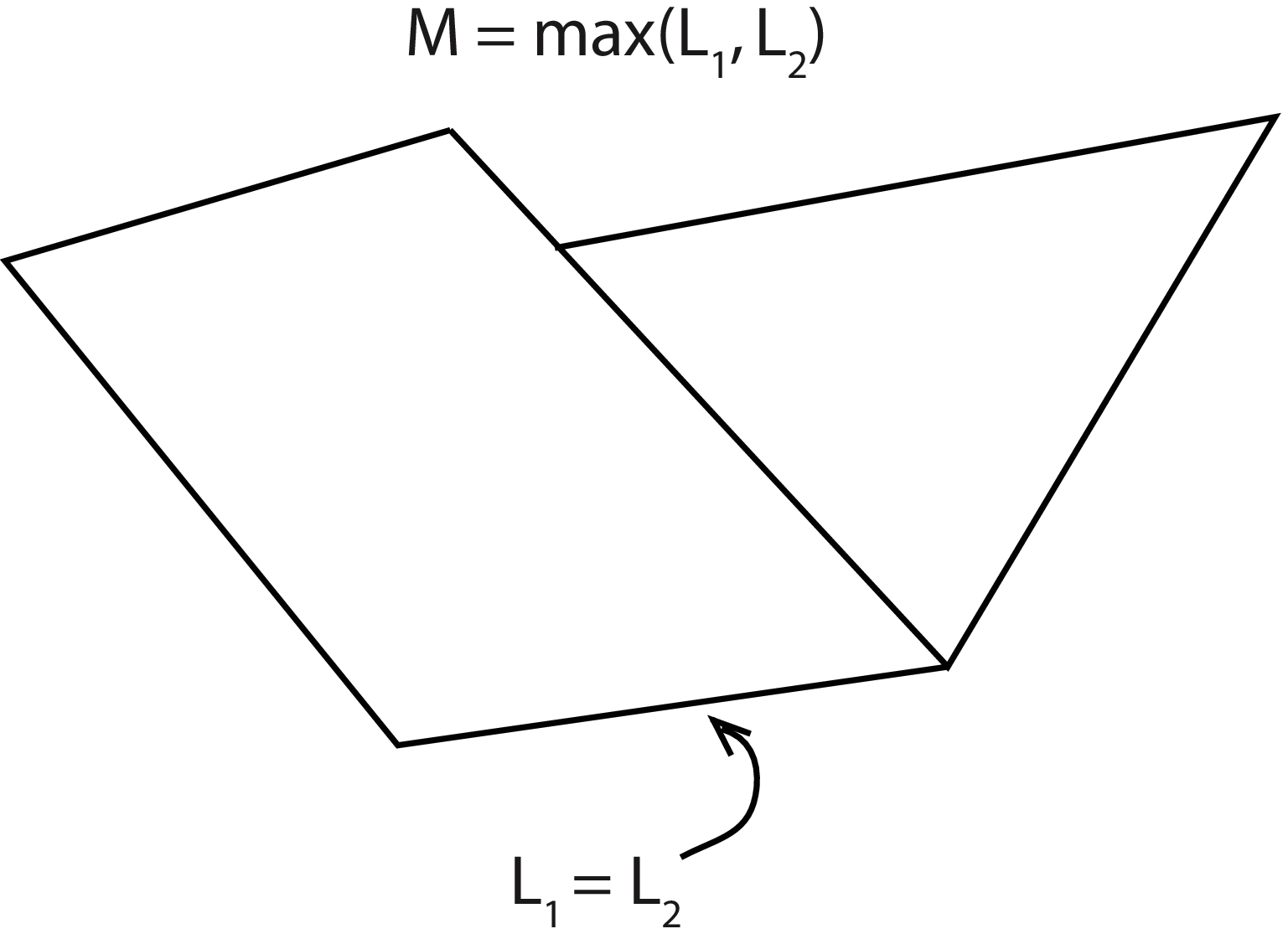}}
\hspace{-.4in}\scalebox{.375}{\includegraphics{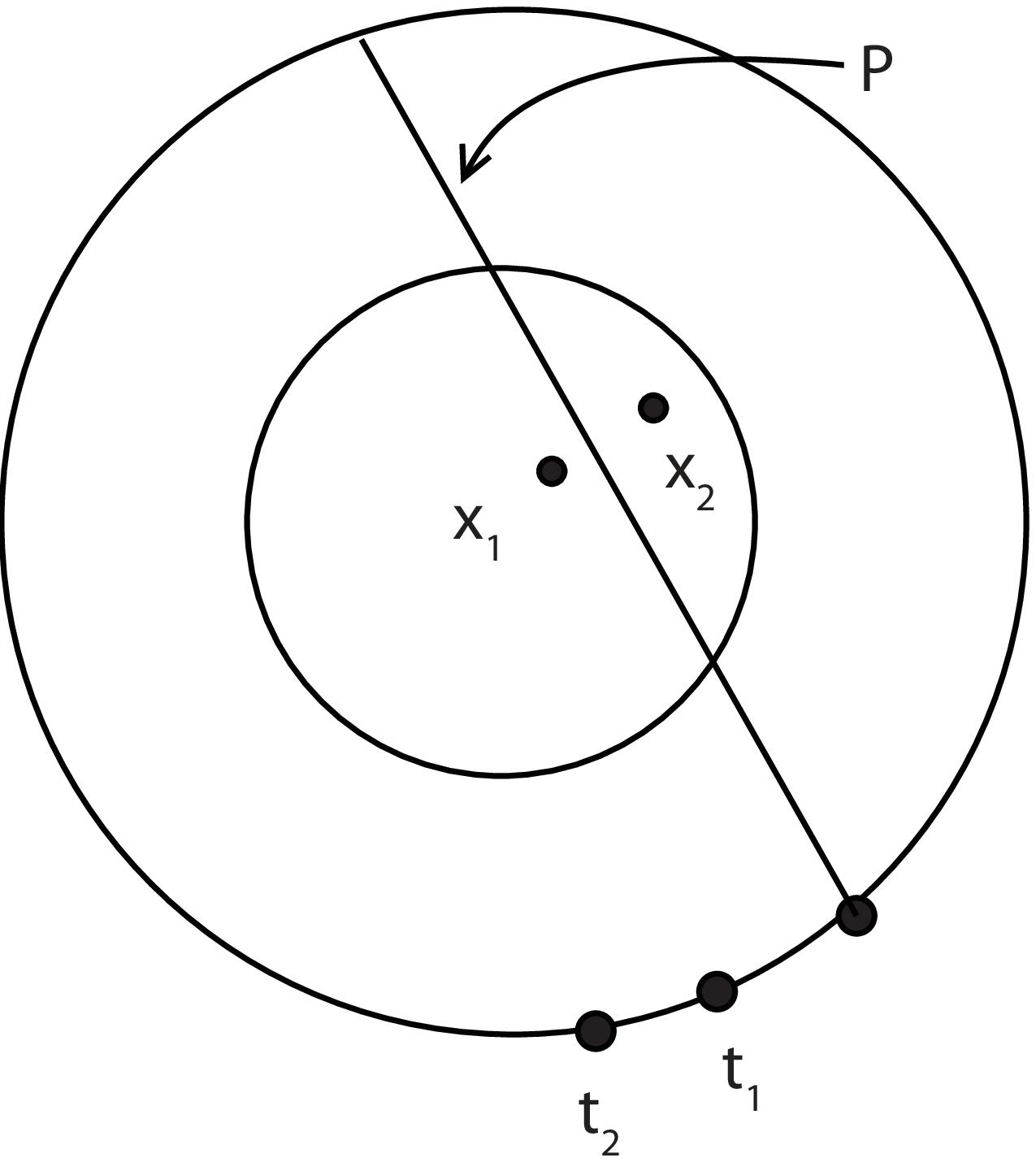}}
\end{center}
\caption{(a) The function $M = \max(L_1,L_2)$ (b)  $P$ separates  $x_1, x_2$ and meets $S_1$ at a bounded angle. }
\label{f:points}
\end{figure}

By subtracting an appropriate affine function from $u$ and $g$ we can assume without loss of generality that 
\begin{align*}
A_1+A_2=0
\end{align*}
and
\bq\label{Mzero}
\min_{x\in B_1} M=0. 
\eq
so the $(n-1)$ dimensional hyperplane $P = \{x \mid L_1(x) = L_2(x) \} $ can be written as 
\[
P= \{x \mid M(x) =0\}
\]
and note for later that we can write 
\bq\label{Meqn}
 M(x) = \frac{|A_1 - A_2|}{2} \dist(x,P)
\eq

As a result of these simplifications, our goal is to  show that 
\bq\label{goal1}
|A_1-A_2| \leq C |x_1-x_2|^\alpha
\eq
for some constant $C$ depending only on the dimension $n$ and $\alpha$.

The proof will proceed based on an estimate on $\min g$ from below in terms of $|A_1 - A_2|$, which will lead to an estimate on $|x_1-x_2|$ from below.

\subsubsection*{Comparing Euclidean and geodesic distances}
We will be comparing distances from points on the boundary  $S_1 = \partial B_1$ to the intersection of $P$ with $S_1$, both in the ambient space $\R^n$ and along the boundary, using the geodesic distance.   With this in mind, write $\dist(y, x)$ for the standard Euclidean distance, and 
write for $y \in S_1$,
\[
\dist_{S_1}(y, P \cap S_1)
\]
for the geodesic distance between $y$ and $P \cap S_1$ on $S_1$.   

Since $P$ separates $x_1$ and $x_2$,  $P$ passes through $B_{1/2}$ and intersects $S_1$ nontangentially  at some minimum angle $\theta_0 > 0$ (which depends only on the dimension $n$).  Refer to~\autoref{f:points}(b).
Thus for any point $y \in S_1$, 
the distance between $y$ and $P$ is greater than, but comparable with the geodesic distance between $y$ and $P \cap \bdary B_1$ on $\bdary B_1$.   
We record this assertion as follows. 
There is a constant $C_2 > 1$ such that for any $y\in S_1$
\bq\label{GeoDist}
 \dist(y,P) \leq  \dist_{S_1}(y, P \cap S_1) \leq C_2 \,\dist(y,P) 
\eq
where
\bq\label{c1c2}
 1 \leq C_2
\eq

\subsubsection*{Setup estimates using distances on the boundary}
Choose  $x_0 \in S_1$ so that  the minimum of $g$ is achieved at $x_0$,
\bq\label{g0}
\min_{x\in S_1} g(x) = g(x_0) \ge 0
\eq
and moreover, this minimum is nonnegative, by~\eqref{Mzero}.

Now select a great circle on $S_1$ which is perpendicular to $P$ and which 
passes through $x_0$.
Let $\gamma(t)$ be a parameterization by arclength of the great circle, which passes through $P$ and $x_0$
\[ 
\gamma(0) \in P, \qquad \gamma(t_1)=x_0.
\]
Using~\eqref{GeoDist}, noting the parameterization is by arclength, we obtain
%there are constants $c_1$ and $C_2$ ($0 < c_1 \leq 1 \leq C_2$ ) such that 
\bq \label{ArcDist}
 \dist(\gamma(t),P) \leq t \leq C_2 \dist(\gamma(t),P),\quad  \text { for $0 < t < \pi/2$}
 \eq
where we have chosen $\gamma(0)$ in the same hemisphere as $x_0$.

By convexity of $u$ and the supporting hyperplane conditions, 
\[%\bq\label{gM}
g(x) \geq u(x)  \geq M(x),\quad   \text{ for $x$ in }\overline B_1. 
\]%\eq
For future reference, define the function of one variable $G(t) = g(\gamma(t))$ and record this last result as
\bq\label{gM}
G(t) \geq M(\gamma(t)),\quad  \text { for $0 < t < \pi/2$}
\eq

\subsubsection*{Apply estimates at two points}
We will proceed to apply~\eqref{ArcDist} along with~\eqref{gM} at $t_1$ and at a second, larger value of $t$ to obtain the desired result.

Combining~\eqref{gM} with the expression~\eqref{Meqn} for $M(x)$, 
we obtain
% $g(x_0) \geq M(x_0) = \frac{|A_1 - A_2|}{2} \dist(x_0,P)$, which can be rewritten as 
\begin{equation} \label{a1}
\dist(x_0,P) \le \frac{ 2 g(x_0) }{ |A_1 - A_2| }
\end{equation}
Applying~\eqref{ArcDist} at $t_1$ we obtain
\bq\label{a2}
t_1 \leq C_2 \dist(x_0,P).
\eq
Combining~\eqref{a2} and~\eqref{a1} we obtain
\bq\label{t1Est}
0 \le t_1  \leq C_2 \frac{ 2 g(x_0) }{ |A_1 - A_2| }. 
 \eq

Next let 
\bq\label{t2defn}
t_2 ={2C_2} \frac{ 2 g(x_0) }{ |A_1 - A_2| }
\eq
then 
\bq\label{t1t2}
  C_2 \frac{ 2 g(x_0) }{ |A_1 - A_2| } \le t_2 - t_1   \le   2C_2 \frac{ 2 g(x_0) }{ |A_1 - A_2| }
%\ge  C_2 \left ( {2}{c_1}^{-1} - 1\right ) \frac{ 2 g(x_0) }{ |A_1 - A_2| }
\eq
%which is non-negative by~\eqref{c1c2} and~\eqref{t1Est}. 

%We also have from~\eqref{gM} followed by~\eqref{Meqn} and~\eqref{ArcDist} and plugging in~
Then compute
\[ \begin{aligned}
G(t_2) &\geq  M(\gamma(t_2)) 
&& \text{ from~\eqref{gM} }\\
& = \frac{|A_1 - A_2|}{2}\dist(\gamma(t_2), P)  
&& \text{ using~\eqref{Meqn} } \\
& \geq  \frac{|A_1 - A_2|}{2} \frac{1}{C_2} t_2
&& \text{ by~\eqref{ArcDist}} \\
& = \frac{|A_1 - A_2|}{2} \frac{1}{C_2}{2C_2} \frac{ 2 g(x_0) }{ |A_1 - A_2| }
&& \text{ inserting~\eqref{t2defn}} \\
& = 2g(x_0) 
&& \text{ after simplifying }
%G\left( \frac{C_2}{c_1} \frac{ 4 g(x_0) }{ |A_1 - A_2| } \right) &\geq M \left( \gamma \left( \frac{C_2}{c_1} \frac{ 4 g(x_0) }{ |A_1 - A_2| } \right) \right) \\
%&\geq \frac{|A_1 - A_2|}{2} C_2 \frac{ 4 g(x_0) }{ |A_1 - A_2| } = 2 C_2 g(x_0).
\end{aligned}
\]
which we record as 
\bq\label{Gt2val}
G(t_2)  \geq  2 g(x_0)
\eq

\subsubsection*{Conclusions from the estimates}
Using ~\eqref{Gt2val} and the definition $G(t_1) = g(x_0)$ we obtain% the inequalities above 
\[
\int_{t_1}^{t_2} G'(t) \dd t = G(t_2) - G(t_1)  \geq  g(x_0) 
\]
%since~\eqref{Mzero} and~\eqref{c1c2} hold.
%\[ \int_l^{2\frac{C_2}{c_1} \frac{ 2 g(x_0) }{ |A_1 - A_2| } } G'(t) \dd t \geq (2C_2-1) g(x_0) \]
Next we use the fact that
$
\int_{t_1}^{t_2} f(t)\dd t \geq \beta
$
implies there exists $t^*$ in $[t_1, t_2]$ such that $f(t^*) \geq \beta (t_2 - t_1)^{-1}$.
Apply this fact to the previous integral to conclude the following.
There exists  $t^*$ in $[t_1, t_2]$ such that
\bq\label{Gpt}
G'(t^*) \geq  g(x_0) (t_2 - t_1)^{-1} \ge \frac{1}{2C_2} \frac{|A_1 - A_2|}{2}
\eq
where we have again used~\eqref{t1t2}.

\subsubsection*{Relate to assumptions on $g$}
Since $g(x_0)$ is the minimum of $g$, 
\[
G'(t_1)=0.
\]
Moreover, since $\norm{g}_{C^{1,\alpha}(\bdary B_1)}=1$ then 
\[
|G'(s_1) - G'(s_2)|\leq |s_1-s_2|^\alpha, \quad \text{ for all } s_1, s_2
\]
In particular,
\[
G'(t^*) \le |t^*-t_1|^\alpha
%\geq |G'(t^*) - G'(l)| = |G'(t^*)|
\]
so substituting~\eqref{Gpt} into the last equation
\[
 \frac{1}{2C_2} \frac{|A_1 - A_2|}{2} 
 \leq |t^*- t_1 |^\alpha  \leq |t_2 - t_1|^\alpha 
 \leq   \left ( 2C_2 \frac{ 2 g(x_0) }{ |A_1 - A_2| } \right )^\alpha
% \leq C \left( \left( 2\frac{C_2}{c_1} - c_1 \right) \frac{ 2 g(x_0) }{ |A_1 - A_2| } \right)^\alpha 
\]
using~\eqref{t1t2}.
This last equation simplifies to
$
g(x_0) \ge (4C_2)^{-1 +1/\alpha}  |A_1-A_2|^{1+1/\alpha}
$
Thus, there is a constant $c$ such that 
\[
\min g = g(x_0) \geq c |A_1-A_2|^{1+1/\alpha}
\]
\subsubsection*{Conclude}
Therefore the constant function $L(x) = c |A_1-A_2|^{1+1/\alpha}$ is below $g$ on 
$S_1$, and then, by comparison
\[
u(x) \geq c |A_1-A_2|^{1+1/\alpha} \text{ for $x$  in $B_1$}
\]
Returning to the original points $x_1$ and $x_2$, and recalling the supporting hyperplanes~\eqref{sh}, we have
 \[
u(x_i) = M(x_i) = \frac{|A_1-A_2|}{2} \dist(x_i,P), \quad  \text{for $i=1,2$}
 \]
then 
\[
\dist(x_i,P) \geq c |A_1 - A_2|^{1/\alpha},
\]
 which clearly implies~\eqref{goal1} which was the desired result.
\end{proof}
 
%\newpage

%\subsubsection*{Pucci operators, and limits}
\newcommand{\ellm}{\gamma}
\newcommand{\ellM}{\Gamma}
\newcommand{\Pmax}{\mathcal{M}^+_{\ellm, \ellM}}
\newcommand{\Pmin}{\mathcal{M}^-_{\ellm, \ellM}} 
 
\section{Estimators for elliptic PDEs.}
\subsection{Sharp underestimators for elliptic PDEs.}
Convex functions are natural candidate subsolutions for nonlinear elliptic partial differential equations.  The equation \eqref{lam} allows us to prove that the convex envelope is the sharpest subsolution for a class of equations, in the sense we describe below.  

Similar results about best \emph{overestimators} can be obtained for the \emph{concave} envelope of the boundary data, which is the solution of $-\lambda_n[u] = 0$.
\newcommand{\ub}{\underline{u}}

What we describe below is closely related to the maximal and minimal Pucci operators, which are the best possible sub- and super-solutions for a class of uniformly elliptic equations with given ellipticity constants, as in~\cite{CaffCabreBook}.  
We begin by showing that the convex and concave envelopes provide the best possible sub- and super-solutions for a class of possibly degenerate elliptic equations. 
We then review the uniformly elliptic case, following~\cite{CaffCabreBook}.
Finally  we show that the Pucci  maximal and minimal operators converge to the convex and concave envelope as the ratio of constants goes to zero or infinity, respectively.

To that end, consider the Dirichlet problem for a nonlinear elliptic partial differential equation,
$F[u](x) \equiv F(D^2u(x), Du(x), u(x),x)$.
For motivation, suppose that we have very little detailed information about the operator $F$.
Assume for clarity that $F$ satisfies
\bq\label{Fhomog}
F(0, p, r, x) = 0, \quad \text{ for all } p\in \R^n, r\in \R ,x \in \Omega.
\eq
Also assume that we have (precisely known or estimated) Dirichlet boundary conditions,
\[
u = g, \quad \text{ on } \partial \Omega.
\]
Then we show below that we can estimate the solution by the convex and concave envelope of the boundary data.   

The case~\eqref{Fhomog} can be generalized to $F(0,p,r,x) = f$, with  corresponding results hold for the operators $\lambda_1[u] = f$.

\subsection{Estimators without ellipticity bounds - envelope operators}
Without assuming that the operator is uniformly elliptic, we obtain the  equation for the best underestimator.  As expected, we get the convex envelope operator.

Define the \emph{best underestimator for elliptic equations} 
\bq\label{BestEst}
\ub = \inf_{F}  \{ u(x) \mid   F[u] = 0 \text{ in } \Omega, u = g \text{ on  } \partial \Omega, F \text{ satisfies } \eqref{Fhomog} \}.
\eq
The boundary conditions hold in the viscosity sense, which is why the result may not achieve the boundary conditions at all points.

\begin{thm}
Consider the class of nonlinear elliptic equations which are homogeneous in the sense of~\eqref{Fhomog} and satisfy the boundary data $g$.  
The best underestimator for this class is the convex envelope of the boundary data.
\end{thm}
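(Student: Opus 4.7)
The plan is to prove the two inequalities $\ub \leq u_{CE}$ and $\ub \geq u_{CE}$ separately, where $u_{CE}$ denotes the convex envelope of the boundary data $g$ in the sense of~\eqref{CEdefn}.

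For the inequality $\ub \leq u_{CE}$, the idea is simply to exhibit a specific $F$ in the admissible class that realizes $u_{CE}$. I would take $F[u] = -\lambda_1[u]$; this $F$ is degenerate elliptic, and since $\lambda_1(0) = 0$ it satisfies the homogeneity condition~\eqref{Fhomog}. By the characterization established earlier in the paper, $u_{CE}$ is precisely the viscosity solution of $\lambda_1[u] = 0$ with boundary data $g$. Hence $u_{CE}$ lies in the set over which the infimum~\eqref{BestEst} is taken, and the inequality follows immediately.

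For the reverse inequality $\ub \geq u_{CE}$, the approach is to use the affine-function characterization of the convex envelope: by the remark following~\eqref{CEdefn}, $u_{CE}$ is the supremum of all affine functions $L(x) = A\cdot x + b$ satisfying $L \leq g$ on $\partial \Omega$. The key observation is that for any such affine $L$ one has $D^2 L \equiv 0$, so by~\eqref{Fhomog}
\[
F[L] = F(0, DL, L, x) = 0
\]
for every admissible $F$. Hence $L$ is itself a (classical, hence viscosity) solution of any equation in the admissible class. If $u$ is a viscosity solution of $F[u] = 0$ with $u = g$ on $\partial \Omega$, then $L \leq g = u$ on $\partial \Omega$, and the comparison principle for degenerate elliptic equations gives $L \leq u$ throughout $\Omega$. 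Taking the supremum over all admissible $L$ yields $u_{CE} \leq u$, and taking the infimum over admissible $F$ yields $u_{CE} \leq \ub$.

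The main obstacle I anticipate is the subtle point about boundary values: as emphasized in the introduction, solutions of these equations need not achieve their Dirichlet data classically, so the inequality $L \leq u$ on $\partial \Omega$ must be understood in the viscosity sense (or equivalently for the semicontinuous envelope of $u$). Standard viscosity-solutions machinery handles this — the comparison between a classical subsolution and a viscosity supersolution with ordered boundary data in the viscosity sense is one of the easier applications of the technique — but it is worth flagging that one needs the admissible class of $F$ to satisfy a comparison principle, which is implicit in calling them ``elliptic.'' A minor additional remark is that in the second part we only used that affine functions solve every admissible $F$, so the argument in fact shows the stronger statement that $u_{CE}$ lies below every viscosity supersolution of any admissible $F$ with boundary data $\geq g$, making the optimality claim quite robust.
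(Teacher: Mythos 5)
Your proof is correct, and the upper bound $\ub \leq \uc$ is handled exactly as in the paper (take $F = -\lambda_1$ in the infimum). For the lower bound $\uc \leq \ub$, however, you take a genuinely different route. The paper verifies directly that $\uc$ is a viscosity \emph{subsolution} of every admissible $F$: if $\phi$ is $C^2$ and $x$ is a local max of $\uc - \phi$, then convexity of $\uc$ gives $D^2\phi(x) \geq 0$, whence $F[\phi](x) \geq F(0, D\phi, \phi, x) = 0$ by degenerate ellipticity and~\eqref{Fhomog}; comparison between $\uc$ and the solution $u$ of $F[u]=0$ then yields $\uc \leq u$. You instead bypass the viscosity subsolution check on $\uc$ entirely by invoking the representation of the convex envelope as a supremum of affine minorants $L \leq g$ on $\partial\Omega$: each such $L$ has $D^2 L = 0$, is therefore a \emph{classical solution} (not merely a subsolution) of every admissible $F$ by~\eqref{Fhomog}, and so $L \leq u$ by comparison; taking the supremum over $L$ gives $\uc \leq u$. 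Your approach is more elementary in that it only compares classical solutions with viscosity solutions, rather than requiring the reader to re-derive that $\uc$ is a viscosity subsolution of an arbitrary elliptic $F$; the paper's argument is slightly more informative in that it establishes the reusable intermediate fact that the convex envelope is itself a subsolution. Both proofs rely on the tacit assumption that each admissible $F$ obeys a comparison principle, a point which you flag more explicitly than the paper does, and both face the same boundary-data subtlety (viscosity-sense Dirichlet data), which you also handle appropriately.
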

\begin{proof}
We make note of the fact that the convex envelope of the boundary data, which we write as $\uc$,
is the solution of \eqref{lam} with boundary data $g$.  
%This follows from Theorem~\ref{thm:lam}.
  
Now simply apply the definition \eqref{BestEst} with $F = -\lambda_1$ to obtain $\ub \leq \uc$.
 
To obtain the other inequality, first verify that the convex envelope is viscosity subsolution of the equation.
We do this by checking Definition~\ref{lambdadefn}.  Suppose $x$ is a local max of $\uc- \phi$, for some $C^2$ function $\phi$.  
By Definition~\ref{lambdadefn}, $\lambda_1[\phi](x) \ge 0$, so $D^2\phi(x) \geq 0$.
 Now compute 
\[
F[\phi](x) \equiv F(D^2\phi(x), D\phi(x), \phi(x), x) \geq F(0, D\phi(x), \phi(x), x) = 0,
\] 
which follows since $F$ is elliptic, and by~\eqref{Fhomog}.  
Now since $\uc$ is the convex envelope of the boundary data,  $\uc \leq g$ on $\partial \Omega$.
Together these facts imply that $\uc$ is a subsolution of $F$.  The comparison theorem applied to $F$ yields $\uc \leq \ub$.

Together, these results imply $\ub = \uc$.
\end{proof}

\subsection{Estimators with ellipticity bounds - Pucci operators}
We next state the case where we have some additional information on the operator $F$.
Suppose in addition that 
\bq\label{elliptic}
\text{$F[u]$ is uniformly elliptic, with constants $0 < \ellm \leq n\ellM$.}
\eq 
This section follows~\cite{CaffCabreBook}, which should be referenced for the definition of uniform ellipticity in the fully nonlinear case, and for additional details.  
(For the fully nonlinear case where  $F$ is differentiable, uniform ellipticity is equivalent to the fact that the linearization of the nonlinear operator $F$ at any particular values should have eigenvalues bounded above and below by the ellipticity constants.)

%%\ellm \leq \frac{\partial F}{\partial X_{ij}}(X,p,r,x)  \leq n\ellM \quad \text{ for all } X\in S_{n\times n} p\in \R^n, r\in \R ,x \in \Omega.
%\eq
%\marginpar{fix this!}
Define the \emph{best underestimator for uniformly elliptic equations},
\[
\ub_{\ellm , \ellM}  =  \inf_{F}  \{ u(x) \mid  F[u] = 0 \text{ in } \Omega, u = g \text{ on  } \partial \Omega, F \text{ satisfies } \eqref{Fhomog} \text{ and } \eqref{elliptic} \}.
\]
Then there is an explicit equation for $\ub_{\ellm , \ellM}$.  It  is the solution of the Pucci minimal equation, \cite[sec.\ 2.2]{CaffCabreBook}, which can be written as
\bq\label{Pucci}
\Pmin[u] = - \left (\ellm \sum_{\lambda_i > 0} \lambda_i + \ellM \sum_{\lambda_i < 0} \lambda_i \right ),  \quad  \{\lambda_i\}_{i=1}^n \text{ eigenvalues of } D^2u.
\eq
In the two dimensional case, the result is the operator
\[
\Pmin[u] = - \left (\ellm  \lambda_1 + \ellM \lambda_2 \right ),
\quad  \lambda_1 \leq \lambda_2 \text{ eigenvalues of } D^2u, \quad \Omega \subset \R^2.
\]
This follows from the simple observation that in order for the $\Pmin[u] = 0$, the eigenvalues must have different signs.

There is a related equation for the best overestimator, which is the Pucci maximal equation, see~ \cite[sec.\ 2.2]{CaffCabreBook}.

\subsection{Convergence of Pucci Operators to the convex envelope}
We are now equipped to prove the following.
\begin{thm}
The best underestimator for uniformly elliptic equations converges uniformly on compact subsets to the best underestimator for elliptic equations, which is the convex envelope of the boundary data, as $\ellM/\ellm \to \infty$.
%, i.e.
%\[\ub_{\ellm, \ellM} \to \ub \text{ as }   \ellM/\ellm \to \infty, \text{ uniformly on compact subsets. }\]
\end{thm}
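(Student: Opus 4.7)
The plan is to sandwich $\ub_{\ellm,\ellM}$ between $\uc$ from below (uniformly in $\ellm,\ellM$) and, in the limit $\ellM/\ellm \to \infty$, by $\uc$ again from above, with the upper bound coming from a Barles--Perthame half-relaxed-limits argument applied after rescaling the Pucci operator.

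The lower bound is a direct subsolution check: by Theorem~\ref{thm:char}, $D^2\uc \geq 0$ in the viscosity sense, so every eigenvalue of the Hessian of $\uc$ is non-negative and
\[
\Pmin[\uc] \;=\; -\ellm \sum_i \lambda_i[\uc] \;\leq\; 0.
\]
Thus $\uc$ is a viscosity subsolution of $\Pmin[u]=0$ with $\uc \leq g$ on $\partial\Omega$, and Pucci comparison gives $\uc \leq \ub_{\ellm,\ellM}$ on $\Omega$. The symmetric argument, applied to the concave envelope $u^{cc}$ of $g$ (which is concave and hence a supersolution of $\Pmin=0$), yields $\ub_{\ellm,\ellM} \leq u^{cc}$. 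For the matching upper bound in the limit, pick sequences $(\ellm_k,\ellM_k)$ with $\eps_k := \ellm_k/\ellM_k \to 0$, let $u^k = \ub_{\ellm_k,\ellM_k}$, and divide the PDE by $\ellM_k$: each $u^k$ solves
\[
F_{\eps_k}[u^k] \;:=\; -\Bigl(\eps_k \sum_{\lambda_i>0}\lambda_i \;+\; \sum_{\lambda_i<0}\lambda_i\Bigr)(D^2 u^k) \;=\; 0,
\]
and $F_\eps$ converges locally uniformly on $C^2$ to $F_0[u] := -\sum_{\lambda_i<0}\lambda_i(D^2 u)$. Standard viscosity stability shows that the upper half-relaxed limit $\bar u(x) = \limsup^*_{k,\,y \to x} u^k(y)$ is a viscosity subsolution of $F_0 = 0$ on $\Omega$, which at any test point forces all eigenvalues of the touching $C^2$ test function to be non-negative; by Theorem~\ref{thm:char} this is equivalent to $\bar u$ being convex on $\Omega$.

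Combined with the sandwich $\uc \leq u^k \leq u^{cc}$ and the classical boundary identity $u^k = g$ on $\partial\Omega$, the convex function $\bar u$ lies below $g$ on $\partial\Omega$ in the upper-semicontinuous sense, so it is an admissible competitor in the affine/Perron characterization~\eqref{CEdefn} of $\uc$, giving $\bar u \leq \uc$ on $\Omega$. Together with $\underline u \geq \uc$ (inherited from $\uc \leq u^k$ and the lower-semicontinuity of convex $\uc$), the half-relaxed limits collapse to $\bar u = \underline u = \uc$, and since $\uc$ is continuous in the interior of $\Omega$ this yields locally uniform convergence $u^k \to \uc$ on compact subsets of $\Omega$. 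The main obstacle is this boundary transfer: since both $\uc$ and the $u^k$ may fail to be continuous up to $\partial\Omega$ and a $k$-dependent boundary layer is expected, showing that $\bar u$ qualifies as an admissible convex minorant of $g$ on $\partial\Omega$ requires care; the cleanest remedy is to argue via affine supporting planes of $\bar u$ at each interior point, using the upper sandwich $\bar u \leq u^{cc}$ to keep those planes below $g$ on $\partial\Omega$.
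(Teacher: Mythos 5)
Your approach is the same in essence as the paper's: normalize the Pucci operator by $\ellM$, identify the limit operator, and invoke viscosity stability. Where you differ is in the level of care: the paper's proof is a two-line sketch that divides by $\ellM$, observes that $\sum_{\lambda_i<0}\lambda_i\to 0$, and declares the result to ``follow from stability.'' You make the stability step explicit via Barles--Perthame half-relaxed limits, you correctly note that the normalized Pucci operator converges to $F_0[u]=-\sum_{\lambda_i<0}\lambda_i$ rather than literally to $-\lambda_1$ (the two agree only at the level of the subsolution test, which is what is actually used to conclude convexity of the half-relaxed limit via Theorem~\ref{thm:char}), and you are correct that the lower bound $\uc\le u^k$ is just a rerun of the subsolution/comparison argument already in the previous theorem. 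So your proof is a faithful and more rigorous rendering of what the paper gestures at.

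However, your flagged gap is real and your proposed remedy does not close it. You want to argue that the half-relaxed limsup $\bar u$ is a convex function admissible in the Perron/affine characterization~\eqref{CEdefn}, and you suggest controlling the supporting planes of $\bar u$ on $\partial\Omega$ via the sandwich $\bar u \le u^{cc}$. But the concave envelope $u^{cc}$ of the boundary data lies \emph{at or above} $g$ on $\partial\Omega$ (it is the infimum of concave functions $\ge g$ there), so $\bar u\le u^{cc}$ gives no upper bound for $\bar u$ against $g$ on the boundary, and in particular does not keep supporting planes of $\bar u$ below $g$. Moreover, $\bar u$ as you define it genuinely jumps up at boundary points where $\uc<g$ (take $\Omega=B_1\subset\R^2$, $g=x^2-y^2$, and look at $\limsup^*$ near $(1,0)$), so you cannot hope for $\bar u\le g$ on $\partial\Omega$ in the relaxed sense. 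A remedy that does work is to exploit monotonicity: as $\ellM/\ellm$ increases the admissible class of operators $F$ is nested increasing, so $\ub_{\ellm,\ellM}$ is nonincreasing; each $u^k$ is a solution of a uniformly elliptic equation and attains the continuous boundary data $g$ on $\partial\Omega$; hence the pointwise decreasing limit $u^*$ is, by stability, a subsolution of $F_0=0$ and therefore convex, and satisfies $u^*\le g$ on $\partial\Omega$, giving $u^*\le \uc$; combined with $\uc\le u^k$ one gets $u^*=\uc$, and Dini's theorem upgrades the monotone pointwise convergence to locally uniform convergence on compact subsets, since $\uc$ is continuous in the interior.
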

\begin{proof}
The result follows from stability of viscosity solutions, once we show that $\Pmin[u] \to -\lambda_1[u]$ as $\ellM/\ellm \to \infty$.  %This follows easily.
Divide through by $\ellM$ in \eqref{Pucci}, and take the limit $\ellM/\ellm \to \infty$, to give
$
\sum_{\lambda_i < 0} \lambda_i =0.
$
This last identity is equivalent to $\min_i \lambda_i = 0$, which recovers $\lambda_1 = 0$.
\end{proof}

\bibliographystyle{amsplain}
\bibliography{convexenvelope}

\providecommand{\bysame}{\leavevmode\hbox to3em{\hrulefill}\thinspace}
\providecommand{\MR}{\relax\ifhmode\unskip\space\fi MR }
% \MRhref is called by the amsart/book/proc definition of \MR.
\providecommand{\MRhref}[2]{%
  \href{http://www.ams.org/mathscinet-getitem?mr=#1}{#2}
}
\providecommand{\href}[2]{#2}
\begin{thebibliography}{10}

\bibitem{CrandallTour}
Gunnar Aronsson, Michael~G. Crandall, and Petri Juutinen, \emph{A tour of the
  theory of absolutely minimizing functions}, Bull. Amer. Math. Soc. (N.S.)
  \textbf{41} (2004), no.~4, 439--505 (electronic). \MR{MR2083637
  (2005k:35159)}

\bibitem{Benoist}
J.~Benoist and J.-B. Hiriart-Urruty, \emph{What is the subdifferential of the
  closed convex hull of a function?}, SIAM J. Math. Anal. \textbf{27} (1996),
  no.~6, 1661--1679.

\bibitem{BertsekasBook}
Dimitri~P. Bertsekas, \emph{Convex analysis and optimization}, Athena
  Scientific, Belmont, MA, 2003, With Angelia Nedi\'c and Asuman E. Ozdaglar.

\bibitem{BoydBook}
Stephen Boyd and Lieven Vandenberghe, \emph{Convex optimization}, Cambridge
  University Press, Cambridge, 2004. \MR{MR2061575 (2005d:90002)}

\bibitem{CaffCabreBook}
Luis~A. Caffarelli and Xavier Cabr{\'e}, \emph{Fully nonlinear elliptic
  equations}, American Mathematical Society Colloquium Publications, vol.~43,
  American Mathematical Society, Providence, RI, 1995.

\bibitem{CarlierLRMaury}
G.~Carlier, T.~Lachand-Robert, and B.~Maury, \emph{A numerical approach to
  variational problems subject to convexity constraint}, Numer. Math.
  \textbf{88} (2001), no.~2, 299--318. \MR{MR1826855 (2002a:49040)}

\bibitem{CrandallPrimer}
Michael~G. Crandall, \emph{Viscosity solutions: a primer}, Viscosity solutions
  and applications (Montecatini Terme, 1995), Lecture Notes in Math., vol.
  1660, Springer, Berlin, 1997, pp.~1--43.

\bibitem{CIL}
Michael~G. Crandall, Hitoshi Ishii, and Pierre-Louis Lions, \emph{User's guide
  to viscosity solutions of second order partial differential equations}, Bull.
  Amer. Math. Soc. (N.S.) \textbf{27} (1992), no.~1, 1--67.

\bibitem{Evansbook}
Lawrence~C. Evans, \emph{Partial differential equations}, Graduate Studies in
  Mathematics, vol.~19, American Mathematical Society, Providence, RI, 1998.

\bibitem{FlemingSonerBook}
Wendell~H. Fleming and H.~Mete Soner, \emph{Controlled {M}arkov processes and
  viscosity solutions}, Applications of Mathematics (New York), vol.~25,
  Springer-Verlag, New York, 1993.

\bibitem{GilbargTrudinger}
David Gilbarg and Neil~S. Trudinger, \emph{Elliptic partial differential
  equations of second order}, Classics in Mathematics, Springer-Verlag, Berlin,
  2001, Reprint of the 1998 edition. \MR{MR1814364 (2001k:35004)}

\bibitem{GriewankRabier}
A.~Griewank and P.~J. Rabier, \emph{On the smoothness of convex envelopes},
  Trans. Amer. Math. Soc. \textbf{322} (1990), no.~2, 691--709.

\bibitem{KirchheimKristensen}
Bernd Kirchheim and Jan Kristensen, \emph{Differentiability of convex
  envelopes}, C. R. Acad. Sci. Paris S\'er. I Math. \textbf{333} (2001), no.~8,
  725--728.

\bibitem{KruskalCounterExample}
J.~B. Kruskal, \emph{Two convex counterexamples: {A} discontinuous envelope
  function and a nondifferentiable nearest-point mapping}, Proc. Amer. Math.
  Soc. \textbf{23} (1969), 697--703. \MR{MR0259752 (41 \#4385)}

\bibitem{LuisRegularity}
Emmanouil Milakis and Luis~E. Silvestre, \emph{Regularity for fully nonlinear
  elliptic equations with {N}eumann boundary data}, Comm. Partial Differential
  Equations \textbf{31} (2006), no.~7-9, 1227--1252. \MR{MR2254613
  (2007d:35099)}

\bibitem{ObermanCE}
Adam~M. Oberman, \emph{The convex envelope is the solution of a nonlinear
  obstacle problem}, Proc. Amer. Math. Soc. \textbf{135} (2007), no.~6,
  1689--1694 (electronic). \MR{MR2286077 (2007k:35184)}

\bibitem{ObermanCENumerics}
\bysame, \emph{Computing the convex envelope using a nonlinear partial
  differential equation}, M3AS \textbf{18} (2008), no.~5, 759--780.

\bibitem{Oksendal}
Bernt {\O}ksendal, \emph{Stochastic differential equations}, sixth ed.,
  Universitext, Springer-Verlag, Berlin, 2003, An introduction with
  applications.

\bibitem{Rockafellar}
R.~Tyrrell Rockafellar, \emph{Convex analysis}, Princeton Landmarks in
  Mathematics, Princeton University Press, Princeton, NJ, 1997, Reprint of the
  1970 original, Princeton Paperbacks.

\bibitem{vese}
Luminita Vese, \emph{A method to convexify functions via curve evolution},
  Comm. Partial Differential Equations \textbf{24} (1999), no.~9-10,
  1573--1591.

\end{thebibliography}

\end{document}